\titleformat{\section}{\large\scshape\raggedright}{\thesection.}{0.5em}{#1}[\titlerule]
\titlespacing{\section}{0pt}{5pt}{5pt} 
\titleformat{\subsection}[runin]{\scshape}{}{0em}{\underline{#1}}
\titlespacing{\subsection}{0pt}{0pt}{1em}
\theoremstyle{plain}
    \newtheorem{theorem}{Theorem}[section]
    \newtheorem{lemma}[theorem]{Lemma}
    \newtheorem*{lemma*}{Lemma}
    \newtheorem*{proposition*}{Proposition}
        \newtheorem*{corollary*}{Corollary}
    \newtheorem*{theorem*}{Theorem}
    \newtheorem{corollary}[theorem]{Corollary}
    \newtheorem{proposition}[theorem]{Proposition}
        \newtheorem{question}[theorem]{Question}
\theoremstyle{definition}
    \newtheorem{remark}[theorem]{Remark}
\newtheorem{clai}[theorem]{Claim}
\newtheorem*{ack}{Acknowledgments}
\theoremstyle{remark}
\numberwithin{equation}{section}
\newcounter{actr}
\newcounter{bctr}
\newcommand{\Hawaii}{Hawai\kern.05em`\kern.05em\relax i}
\newcommand{\R}{\mathbb{R}}
\newcommand{\N}{\mathbb{N}}
\newcommand{\Z}{\mathbb{Z}} 
\newcommand{\HH}{\mathbb{H}}
\DeclareMathOperator{\Iso}{Iso}
\DeclareMathOperator{\sep}{sep}
\DeclareMathOperator{\Aut}{Aut}
\DeclareMathOperator{\PO}{PO}
\DeclareMathOperator{\SL}{SL}
\DeclareMathOperator{\PSL}{PSL}
\DeclareMathOperator{\GL}{GL}
\begin{document}   

\title{On coarse embeddings of amenable groups into hyperbolic graphs}
\date{}

\author{Romain Tessera}

%

\maketitle

\begin{abstract}
In this note we prove that if a finitely generated amenable group admits a regular map to $\HH^n\times \R^d$, then it must be virtually nilpotent of degree of growth at most $d+n-1$.  This is sharp as $\Z^{n+d-1}$ coarsely embeds into $\HH^n\times \R^d$.
We deduce that an amenable group regularly (or coarsely) embeds into a hyperbolic group if and only if it is virtually nilpotent, answering a question of Hume and Sisto. We describe an application to Lorentz geometry due to Charles Frances. 
 \end{abstract}

\section{The result}

It is well-known and easy to see that a finitely generated group that quasi-isometrically embeds into a hyperbolic graph must be hyperbolic. By contrast, Gromov observed that any locally finite graph coarsely embeds into a locally finite hyperbolic graph possibly with unbounded degree: see for instance Groves-Manning's ``Combinatorial horoball" construction \cite[Proposition 3.7.]{GM}. In this note we consider the problem of finding obstructions to the existence of coarse embeddings of finitely generated groups into a {\it bounded degree} hyperbolic graph, and more generally into a direct product of a bounded degree hyperbolic graph with a euclidean space. We shall provide a sharp answer for amenable groups. 

Actually we will consider an even weaker notion of embedding, called {\it regular maps}. 
Recall from  \cite{BST} that a map $\phi:X\to Y$ between the vertex sets of graphs with bounded degree is called regular if there exists a positive integer $m<\infty$ such that $\phi$ is Lipschitz and at most $m$-to-one. The class of regular maps is stable under composition, and includes coarse embeddings.  It is interesting though to note that regular maps form a larger class of maps, allowing some form of ``folding": for instance, the map $n\to |n|$ from $\Z\to \N$ is a regular map but not a coarse embedding. Let $\mathcal{C}$ be the class of  metric spaces which are quasi-isometric to bounded degree graphs. Those include locally compact, compactly generated groups \cite[Proposition 6.7.]{Te}. Following \cite{HMT'}, one can define the class of regular maps between elements of $\mathcal{C}$ as the smallest class of maps containing quasi-isometries, and regular maps between bounded degree graphs\footnote{Alternatively a more explicit definition can be cooked up for a certain class  of metric measure spaces \cite{HMT}.}.

Let $\HH^n$ denote the real hyperbolic space of dimension $n$. The main goal of this note is to establish the following theorem.

\begin{theorem}\label{thm:main}
Let $G$ be a unimodular amenable locally compact compactly generated group. Assume that there exists $n\geq 2$ and $d\in \N$ such that $G$ regularly maps to $\HH^n\times P$, where $P$ is a locally compact, compactly generated group of polynomial growth of degree $d$. Then $G$ must have polynomial growth of degree at most $d+n-1$.
\end{theorem}

To put Theorem \ref{thm:main} in perspective, we recall the following well-known fact.
\begin{proposition}\label{prop:easydirection}
Let $G$ be compactly generated locally compact group of polynomial growth of degree $d$. Then there exists $n=n(d)$ such that $G$ admits a coarse embedding into $\HH^n$.
Moreover, if $G=\R^d$, then it coarsely embeds into $\HH^{d+1}$.
\end{proposition}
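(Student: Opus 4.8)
The engine is the following elementary sub-claim: for every $m\ge 1$ the inclusion of a horosphere makes $\R^m$ a coarse subspace of $\HH^{m+1}$. Working in the upper half-space model $\HH^{m+1}=\{(x,s):x\in\R^m,\ s>0\}$ with metric $s^{-2}(|dx|^2+ds^2)$, consider $\iota\colon\R^m\to\HH^{m+1}$, $\iota(x)=(x,1)$. The standard distance formula $\cosh d_{\HH^{m+1}}\big((x,s),(y,t)\big)=1+\frac{|x-y|^2+(s-t)^2}{2st}$ gives
\[
d_{\HH^{m+1}}\big(\iota(x),\iota(y)\big)=\operatorname{arccosh}\!\Big(1+\tfrac12|x-y|^2\Big)=:g\big(|x-y|\big),
\]
where $g\colon[0,\infty)\to[0,\infty)$ is continuous, strictly increasing and proper (in fact $g(t)=2\log t+O(1)$ as $t\to\infty$). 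Taking $\rho_-=\rho_+=g$ exhibits $\iota$ as a coarse embedding; restricting it to $\Z^d$ with $m=d$ is already the ``moreover'' clause, with only logarithmic distortion.

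For a general finitely generated group $G$ of polynomial growth of degree $d$, the plan is to first reach a Euclidean space. Having polynomial growth, $G$ is a doubling metric space for its word metric (each ball of radius $2r$ is covered by a bounded number of balls of radius $r$, since $|B(2r)|\asymp|B(r)|$), so by Assouad's embedding theorem \cite{As} there are $N$ and a bi-Lipschitz embedding $\psi$ of the snowflake $(G,d_G^{1/2})$ into $\R^N$; the inequalities $c\,d_G(x,y)^{1/2}\le|\psi(x)-\psi(y)|\le C\,d_G(x,y)^{1/2}$ say precisely that $\psi$ is a coarse embedding of $(G,d_G)$ into $\R^N$. Composing with $\iota\colon\R^N\to\HH^{N+1}$ yields a coarse embedding $G\hookrightarrow\HH^{N+1}$. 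To see that $N$, hence $n$, can be taken to depend only on $d$, one replaces $G$ up to quasi-isometry by the canonical model supplied by Gromov's polynomial growth theorem --- a simply connected nilpotent Lie group (indeed its associated Carnot group) of homogeneous dimension $d$ --- which is doubling with doubling constant, and therefore Assouad dimension, bounded in terms of $d$ alone; precomposing the resulting coarse embedding with the quasi-isometry $G\to(\text{model})$ then gives $n=n(d)$.

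I expect no real obstacle here: this is the ``easy direction''. The only two points that deserve a line each are (i) the horosphere distance computation, whose content is exactly that the distortion one pays is merely logarithmic --- harmless for a coarse embedding, even though no horosphere is quasi-isometrically embedded in $\HH^{m+1}$ --- and (ii) that a bi-Lipschitz embedding of the $1/2$-snowflake is automatically a coarse embedding of the original metric. It is worth stressing that the dimension produced by Assouad's theorem is far from sharp, which is why the optimal target $\HH^{d+1}$ in the ``moreover'' clause genuinely needs the direct horosphere construction rather than this reduction; the optimality of that target is, conversely, the special case $P=\{e\}$ of Theorem \ref{thm:main}.
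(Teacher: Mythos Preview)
Your argument is correct and is essentially the paper's: first Assouad to reach a Euclidean space, then the horosphere inclusion $\R^{m}\hookrightarrow\HH^{m+1}$. The paper packages the second step group-theoretically, observing that $\Z^{n-1}$ is a closed (hence coarsely embedded) subgroup of $A_n=\R^{n-1}\rtimes_{e^t}\R\simeq\HH^n$, which is exactly your horosphere $\{s=1\}$ in the upper half-space model; one small quibble is that a virtually nilpotent group is in general not quasi-isometric to its associated Carnot group (only to its Malcev completion), but since the latter already has Assouad dimension $d$ your argument for $n=n(d)$ is unaffected.
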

\begin{proof}[Proof of Proposition \ref{prop:easydirection}]
By Assouad's embedding theorem \cite{As}, $G$ admits a coarse embedding into $\R^{n-1}$ for some $n\in \N$ only depending on $d$. Define $A_n=\R^{n-1} \rtimes \R$, where $t\in \R$ acts by multiplication by $e^t$. Recall that $A_n$ acts simply transitively by isometries on $\HH^n$. Pulling back the Riemannian metric of $\HH^n$, we obtain a left-invariant Riemannian metric $d$ on $A_n$, such that $(A_n,d)$ is isometric to $\HH^n$. On the other hand $\R^{n-1}$ being a closed subgroup of $A_n$, we deduce that it is coarsely embedded into $\HH^n$. Composing these coarse embeddings we obtain a coarse embedding of $G$ into $\HH^n$ as required.
\end{proof}
 
We deduce from Proposition \ref{prop:easydirection} that $\R^{d+n-1}$ coarsely embeds into $\HH^{n}\times \R^d$, showing that the bound from Theorem \ref{thm:main}  is sharp.

Recall that a graph $Z$ is doubling if there exists $N>0$ such that each ball of radius $2r$ can be covered by $N$ balls of radius $r$.  We can deduce the following more qualitative statement.
 \begin{corollary}\label{cor:main}
Let $G$ be a  unimodular, amenable, locally compact, compactly generated group. The following are equivalent.
\begin{itemize}
\item[(i)] $G$ has polynomial growth;
\item[(ii)] $G$  coarsely embeds into a Gromov hyperbolic finitely generated group $\Gamma$.
\item[(ii')] $G$  regularly maps to a Gromov hyperbolic finitely generated group $\Gamma$.
\item[(iii)] $G$ regularly maps to $X\times Z$, where $X$ is a bounded degree Gromov  hyperbolic graph, and $Z$ is a doubling graph. 
\item[(iv)] $G$ regularly maps to $\HH^n\times \R^n$ for some $n$. 
\end{itemize}
\end{corollary}
 \begin{proof}[Proof of Corollary \ref{cor:main}]
 Clearly, (ii) implies (ii') implies (iii). Assume (i). By Proposition \ref{prop:easydirection}, $G$ coarsely embeds into $\HH^n$ for some $n$, hence in some uniform lattice $\Gamma$ in $\Iso(\HH^n)$, which is hyperbolic. Hence (ii) follows.
 By a celebrated result of Bonk and Schramm \cite{BS}, a hyperbolic graph with bounded degree quasi-isometrically embeds into $\HH^n$ for some large enough $n$. Besides, by Assouad's embedding theorem \cite{As}, any doubling graph can be coarsely embedded into $\R^n$ for some large enough $n$. Hence (iii) implies (iv). Finally (iv) implies (i) is the content of Theorem \ref{thm:main}.
 \end{proof}

The implication (ii) implies (i) for finitely generated amenable groups answers \cite[Question 1.6]{HS}.
Let us mention that (ii) implies (i) (resp.\  (ii') implies (i)) have been proved by Hume and Sisto  \cite{HS} (resp.\ Le Coz and Gournay \cite{CG}) 
under the condition that $G$ is finitely generated {\it and} solvable.

 Our proof of Theorem \ref{thm:main} consists in relating three geometric quantities (see \S \ref{sec:Proof} for definitions): the isoperimetric function (encoding amenability), the separation profile (encoding hyperbolicity), and volume growth (for the conclusion). More specifically, we exploit a simple but subtle relationship between the isoperimetric function and the separation profile discovered by 
Le Coz and Gournay (see Lemma \ref{lem:crucial}). We deduce from it an inequality relating the isoperimetric function and the separation profile which holds when the latter grows sufficiently slowly (see Proposition \ref{prop:sep and j}).  An other crucial ingredient in our proof is Coulhon and Saloff-Coste's famous inequality relating the volume growth and the isoperimetric function. In their proof of (ii') implies (i) for solvable groups, Le Coz and Gournay exploit the calculation of the separation profile of $\HH^n$ by Benjamini, Schramm and Timar \cite{BST}. Similarly, to obtain the quantitative statement of  Theorem \ref{thm:main} (as well as (iv) implies (i) in the corollary), we exploit the recent computation of the separation profile of direct products of the form $\HH^n\times P$ performed by Hume, Mackay and the author in \cite{HMT'}.

 Theorem \ref{thm:main} suggests that the class of finitely generated groups admitting a regular map to $\HH^n\times \R^n$ should be quite restricted. Say that a group is RHP if it is relatively hyperbolic with peripheral subgroups to polynomial growth. Recall that an RHP group admits a coarse embedding into $\HH^n$ for $n$ large enough \cite{DY}. The following question (which we expect to have a negative answer) emerged from a discussion with David Hume and John Mackay.

 \begin{question}
Must a finitely generated group  that regularly (or coarsely) maps to $\HH^n$  be commensurable to a subgroup of a RHP group?  \end{question}

Similarly, one might ask whether a group  that regularly (or coarsely) maps to $\HH^n\times \R^n$ is commensurable to a subgroup of a direct product of an RHP group with a nilpotent group? This turns out to be false as shown by the following example. 
Consider a surface group $\Gamma_g$ of genus $g\geq 2$, and let $\tilde{\Gamma}_g$ be its canonical central extension, obtained by pulling back $\Gamma_g$ in the universal cover of $\SL(2,\R)$. It is well known that $\tilde{\Gamma}_g$ is quasi-isometric to the direct product $\Gamma_g\times \Z$ (see \cite[\S 3.2]{KajalTes}), while $\tilde{\Gamma}_g$ is not commensurable to a direct product with $\Z$ (for instance this can be deduced  from 
\cite[Lemma 3.1]{KajalTes}).

\section{An application to Lorentz geometry (after Charles Frances)}

It is an early observation by Gromov that the isometry group  of a compact Lorentz $(n+1)$-manifold $M_{n+1}$ coarsely embeds into $\HH^n$ (see \cite[Sections 4.1.C and 4.1.D]{Gr} and \cite[Lemma 2.2]{CF} for a detailed proof). 
But it is only in 2021 that this point of view was successfully exploited to yield important information on the structure of $\Iso(M_{n+1})$. Indeed, in a recent breakthrough, Charles Frances managed to prove the following result (see \cite[Theorem B]{CF}).

\begin{theorem}\label{thmCharles1} Let $(M_{n+1}, g)$ be a smooth compact
$(n + 1)$-dimensional Lorentz manifold, with $n \geq 2$.
Then every finitely generated subgroup of $\Iso(M, g)$ either contains a free
subgroup in two generators, in which case it is virtually isomorphic to a discrete subgroup
of $\PO(1, n)$, or is virtually nilpotent of growth degree $\leq n-1$.
\end{theorem}

In \cite{CF}, this result is essentially obtained as a corollary of Theorem \ref{thm:main} and \cite[Theorem A]{CF}, which we recall for convenience:

\begin{theorem}\label{thmCharles2} Let $(M_{n+1}, g)$ be a smooth compact
$(n + 1)$-dimensional Lorentz manifold, with $n \geq 2$. Assume that $\Iso(M, g)$ contains a closed, compactly generated subgroup with exponential growth. Then we are in exactly one of the
following cases:
\begin{enumerate}
\item  The group $\Iso(M, g)$ is virtually a Lie group extension of $\PSL_2(\R)$ by a compact
Lie group.
\item There exists a discrete subgroup $\Lambda\subset \PO(1,d)$ for $2\leq d\leq n$, such that
$\Iso(M, g)$ is virtually a Lie group extension of $\Lambda$ by a compact Lie group.
\end{enumerate}
\end{theorem}

At the time when \cite{CF} was published, Theorem \ref{thm:main} had only been stated (and proved) for finitely generated groups, which is enough for Theorem \ref{thmCharles1}. However, as outilned in \cite{CF}, the full version of Theorem \ref{thm:main} can be used to prove the following result. 
\begin{theorem}\label{thm:CharlesMoi}
Let $(M_{n+1}, g)$ be a smooth compact
$(n + 1)$-dimensional Lorentz manifold, with $n \geq 1$. Then $\Iso(M, g)$ satisfies the Tits alternative.
\end{theorem}
A group is commonly said to satisfy the Tits alternative if any finitely generated subgroup either contains a non-abelian free subgroup, or is virtually solvable. This property refers to Tits' famous theorem according to which such alternative is satisfied by $\GL_n(K)$ for any field  $K$ \cite{Tits}. Since the Tits alternative is obviously stable under central extension, it is also satisfied by connected Lie group (hence by almost connected Lie groups). In the sequel, the connected component of a Lie group $G$ will be denoted by $G^0$.

Theorem \ref{thm:CharlesMoi} is a straightforward consequence of Theorems \ref{thmCharles2} and \ref{thm:main}, and can be proved following the lines of the deduction of \cite[Theorem B]{CF} from \cite[Theorem A]{CF}. For the sake of completness, we provide a proof below.
We shall need the following lemma, which is well-known to experts. 
\begin{lemma}\label{lem:Tits}
Assume that we have an extension of locally compact groups $1\to H\to G\to Q\to 1$, where $H$ is an almost connected Lie group, and $Q$ satisfies the Tits alternative. Then $G$ satisfies the Tits alternative.
\end{lemma}
\begin{proof}
It is easy to check that the Tits alternative is stable under subgroups, direct product, and (as already mentioned) central extension. Let $\Gamma$ be a finitely generated subgroup of $G$. 
Let us  first assume that $H$ is finite. Consider the obvious morphism $p:G\to \Aut(H)\times Q$, with central kernel. We conclude from the fact that $\Aut(H)$ and  $Q$ both satisfy the Tits alternative. 
Let us now turn to  the general case. Since $G/H^0$ is finite-by-$Q$, and therefore satisfies Tits' alternative by the first case, we can  assume that $H$ is connected. Consider the morphism $\pi:G\to \GL(\mathfrak{h}) \times Q$, where the first component is given by the adjoint action of $G$ on the Lie algebra of $H$.  Once again, we conclude from the facts that $\pi$ has central kernel, and  $\GL(\mathfrak{h})$  and $Q$ satisfy the Tits alternative.
\end{proof}

\begin{proof}[Proof of Theorem \ref{thm:CharlesMoi}] Recall that $\Iso(M_{n+1})$ is a Lie group (with possibly infinitely many connected components).
Isometry groups of compact Lorentz surfaces are well-known to be either compact or compact by cyclic, in which case the theorem follows from Lemma  \ref{lem:Tits}. Hence we can assume $n\geq 2$ (as in Theorem \ref{thmCharles2}).
 Let $\Gamma$ be a subgroup of $\Iso(M_{n+1})$, generated by a finite set $S$. Denote by $G$ the closure of $\Gamma$ in $\Iso(M_{n+1})$. Let $K$ be a compact neighborhood of the neutral element in $G$. Note that $K\cup S$ forms a (compact) generating set of $G$. Assume first that $G$ is either non-amenable or non-unimodular. Then in particular it has exponential growth, and we are in the situation of Theorem \ref{thmCharles2}, in which case the conclusion follows from Lemma \ref{lem:Tits}. Otherwise, we combine Gromov's observation (saying that $G$ coarsely embeds into $\HH^n$) with Theorem \ref{thm:main} to deduce that $G$ has polynomial growth. Since $G/G^0$ has polynomial growth, we know by Gromov's theorem \cite{Gro} that $G/G^0$ is virtually nilpotent. In particular $G/G^0$ satisfies the Tits alternative, and we conclude thanks to Lemma \ref{lem:Tits}.
\end{proof}

\begin{remark}
A more careful inspection of the proof yields the stronger conclusion that any closed compactly generated subgroup of $\Iso(M_{n+1})$ is virtually a hyper-central extension of a linear group. In particular, amenable finitely generated subgroups are virtually nilpotent-by-abelian. We leave the details to the reader.
\end{remark}

\section{... and its proof}\label{sec:Proof}

We now turn to the proof of Theorem \ref{thm:main}, which relies on two important quasi-isometry invariants: the isoperimetric function and the separation profile.  We shall use the usual equivalence relation on monotonous functions: $f\approx g$ if $f\preceq g$ and $g\preceq f$, where the latter means that there exists $C>0$ such that $g(n)\leq Cf(Cn)$. The class modulo $\approx$ of a monotonous function will be referred to as its {\it asymptotic behavior}. 

Given a connected graph $\Gamma$, define its isoperimetric function\footnote{We do not use the term isoperimetric profile on purpose, as the latter is usually defined as the inverse of $j_{\Gamma}$.} as follows: \[j_{\Gamma}(n)=\inf_{|A|\leq n} \frac{|\partial A|}{|A|},\]
where $\partial A$ is the set of edges joining vertices of $A$ vertices outside of $A$.

Introduced in  \cite{BST}, the separation profile $\sep_\Gamma(n)$ of a connected graph $\Gamma$ is the supremum over all subgraphs of size $n$, of the number of edges needed to
be removed from the subgraph, in order for the connected pieces of the remaining graph to have size at most $n/2$. 

\begin{remark}
In the original definition of separation profile, the subset which is removed is a subset of vertices. But for bounded degree graphs, it is easy to see that the two resulting definitions of separation profiles have same asymptotic behaviors. Moreover, the advantage of working with edges instead of vertices is that our main intermediate result, Proposition \ref{prop:sep and j}, does not require the assumption that the graph has bounded degree. 
\end{remark}
Now if $X$ is a metric space quasi-isometric to a bounded graph $\Gamma_X$ (e.g.\ $X$ is a compactly generated locally compact group), then we shall define its isoperimetric function and separation profile to be that of $\Gamma_X$. Since their asymptotic behaviors are invariant under quasi-isometry, this definition does not depend on a choice of $\Gamma_X$.

A key result is the following general fact which we believe to be independent interest. Let us say that a graph is amenable if $\lim_{v\to \infty}j_X(v)=0$.

\begin{proposition}\label{prop:sep and j}
Let $X$ be an amenable connected graph. Assume that there exists $a>0$ such that $\sep_X(v)\preceq v^{1-a}$, then $j_X(v)\preceq v^{-a}$.
\end{proposition}
\begin{remark}
Although we will not need it here, the following proof shows that $j_X(v)\preceq \frac{\sep_X(v)}{v}$ for a wide range of behaviour of $\sep_X$, for instance if $\sep_X(v)\approx v^{1-a}(\log v)^b$ for any $a>0$ and $b\in \R$.
\end{remark}
 
Recall that by  Coulhon and Saloff-Coste's  inequality \cite{CoulhonSaloff}, if $G$ is a unimodular amenable locally compact compactly generated group such that $j_G\preceq v^{-a}$, then $V_G(r)\preceq r^{1/a}$. Using that the asymptotic behaviour of the isoperimetric profile and of the separation profile are both invariant under quasi-isometry, we immediately deduce the following result.

\begin{proposition}\label{prop:sep and growth}
Let $G$ be a unimodular amenable locally compact compactly generated group.  Assume that there exists $a>0$ such that $\sep_G(v)\preceq v^{1-a}$. Then $G$ has polynomial growth of degree $\leq 1/a$.
\end{proposition}

We now turn to the proof of Proposition \ref{prop:sep and j}.
Its main ingredient is the following observation which is a variant of \cite[Lemma 3.2.]{CG}. 

\begin{lemma}\label{lem:crucial}
For any graph $X$, and for all $v\geq 2$, one has
\[j_X(3v/4)-j_X(v)\leq \frac{8\sep_X(v)}{v}. \]
\end{lemma}
\begin{proof}
If $j_X(v)=j_X(v/2)$, then by monotonicity of $j_X$, we deduce that $j_X(3v/4)=j_X(v)$, so there is nothing to prove. Else, this implies that there exists a subset $F$ such that $v/2<|F|\leq v$ which realizes the isoperimetric function $j_X(v)$, i.e.\ such that 
$j_X(v)=\frac{|\partial F|}{|F|}.$
By definition of the separation profile, there exists a set $L$ of  edges of the induced graph of $F$, with $|L|\leq \sep_X(v)$, such that after removing them, each connected component has size at most $|F/2|$. 
Order those components $F_1,F_2\ldots, $ such that $|F_1|\geq |F_2|\geq \ldots$, and
let $i$ be the first integer such that $|\bigcup_{j\leq i}F_j|\geq |F|/4$. 
Let $A_1=\bigcup_{j\leq i}F_j$, and let $A_2=\bigcup_{j> i}F_j$. Note that $|F|/4\leq|A_1|\leq |F|/2$, and so $|F|/2\leq |A_2|\leq 3|F|/4$. In particular, for $i=1,2$, we have
\[v/8\leq |A_i|\leq 3v/4\]
For each $i=1,2$, $\partial A_i$ splits into two parts: the edges contained in $L$, and those, denoted by $\partial^e A_i$, contained in $\partial F$. 
We have
\[j_X(v)=\frac{|\partial F|}{|F|}=\frac{|\partial^eA_1|+|\partial^eA_2|}{|A_1|+|A_2|}\geq \min_{i=1,2}\frac{|\partial^eA_i|}{|A_i|}\geq  \min_{i=1,2}\frac{|\partial A_i|-|L|}{|A_i|}.\] 
We deduce that 
\[j_X(v)\geq j_X(3v/4)-\frac{8|L|}{v}\geq j_X(3v/4)-\frac{8\sep_X(v)}{v},\]
from which the lemma follows.
\end{proof}

\begin{proof}[Proof of Proposition \ref{prop:sep and j}]
Let $C\geq 0$ be such that for all $v\geq 1$, $\sep_X(v)\leq  Cv^{1-a}$.
For all $n\geq 1$, we deduce from the lemma that
\[j_X((4/3)^{n-1})-j_X((4/3)^n)\leq \frac{8\sep_X((4/3)^n)}{(4/3)^n}\leq 8C(3/4)^{an}.\]
Summing over $n\geq k+1$, and using that $j_X(v)$ tends to $0$ as $v\to \infty$, we get
\[j_X((4/3)^k)\leq 32(3/4)^{ak}\]
So for all $v$, applying the latter to $k$ for which $(4/3)^{k}\leq v\leq (4/3)^{k+1}$, and using the fact that $j_X$ is non-decreasing, we get
\[j_X(v)\leq  j_X((4/3)^{k})\leq 32(3/4)^{ak}\leq  32(3v/4)^{-a}.\]
Hence $j_X(v)\preceq v^{-a}$, as expected.
\end{proof}

We are now ready to finish the proof of Theorem \ref{thm:main}.

\begin{proof}[Proof of Theorem \ref{thm:main}] 
By \cite[Theorem C]{HMT}, the separation profile of $\HH^n\times P$ satisfies $\sep_{\HH^n\times P}(v)\approx v^{1-\frac{1}{d+n-1}}$ for $n\geq 3$. 
Since the separation profile is monotonous under regular maps \cite{BST,HMT}, we deduce that $\sep_{G}(v)\preceq v^{1-\frac{1}{d+n-1}}$.
Hence by Proposition \ref{prop:sep and growth}, we have $V_G(r)\preceq r^{n+d-1}$, which ends the proof of the theorem.
\end{proof}

\begin{ack} I thank David Hume and John Mackay for their useful comments on a preliminary version of this paper. I am also grateful to Charles Frances for suggesting me to include Theorem \ref{thm:CharlesMoi} and its proof in this note. \end{ack}



\begin{thebibliography}{KM98b}

\bibitem{As} P. Assouad. Sur la distance de Nagata. C. R. Acad. Sci. Paris S\'er. I
Math., 294(1), 31–34, 1982.

\bibitem{BS} M. Bonk and O. Schramm, Embeddings of Gromov hyperbolic spaces Geom. Funct. Anal.
10(2), 266-306, 2000.

\bibitem{BST} I. Benjamini, O. Schramm, and A. Tim\'ar. On the separation profile of infinite graphs.
Groups Geom. Dyn., 6(4), 639–658, 2012.

\bibitem{CF} C. Frances. Isometry group of Lorentz manifolds: a coarse perspective. 
Geom. Func. Anal. 31(5), 1095-1159, 2021.

\bibitem{CG} C. Le Coz and A. Gournay. Separation profiles, isoperimetry, growth and compression.
Preprint available from ArXiv, arXiv:1910.11733, 2019.


\bibitem{CoulhonSaloff} Th. Coulhon and L. Saloff-Coste. Isop\'erim\'etrie pour les groupes et les vari\'et\'es. Rev.
Mat. Iberoamericana, 9(2), 293–314, 1993.



\bibitem{DY} F. Dahmani, A. Yaman. Bounded geometry in relatively hyperbolic groups. New York J. Math. 11, 89--95, 2005.

\bibitem{Gr} M. Gromov. Rigid transformation groups, G\'eom\'etrie Diff\'erentielle, (D. Bernard et ChoquetBruhat Ed.), Travaux en cours, Hermann, Paris, 33, 65–141, 1988
 
\bibitem{Gro} M. Gromov. Groups of polynomial growth and expanding maps. Inst. Hautes \'Etudes Sci. Publ. Math. 53. With an appendix by Jacques Tits: 53--73, 1981.
  
 
\bibitem{GM} D Groves, J F Manning, Dehn filling in relatively hyperbolic groups, Israel J. Math. 168, 317--429, 2008.

\bibitem{HMT} D. Hume, J. Mackay, and R. Tessera. Poincar\'e profiles of groups and spaces. Rev.
Mat. Iberoamericana 6, 2019.


\bibitem{HMT'} D. Hume, J. Mackay, and R. Tessera. Poincaré profiles of Lie groups and a coarse geometric dichotomy. Geom. Func. Anal. 32, 1063--1133, 2022.

\bibitem{HS} D. Hume and A. Sisto. Groups with no coarse embeddings into hyperbolic groups. New
York J. Math. 23, 1657–1670, 2017.

\bibitem{KajalTes} K. Das, R. Tessera. Integrable measure equivalence and the central extension of surface groups. Group Geom. Dyn. 10(3), 965--983, 2016.
 
 
 \bibitem{Te} R. Tessera. Large scale Sobolev inequalities on metric measure spaces and applications. Rev. Mat. Iberoam. 24(3), 825--864, 2008.
 
\bibitem{Tits} J. Tits, (1972). Free subgroups in linear groups. J. of Algebra, 20(2): 250--270, 1972.



\end{thebibliography}
\end{document}